\begin{document}

\title{Approximate Core Allocations for\\ Edge Cover Games}

\author{Tianhang Lu~\orcidID{0009-0001-8137-8149} \and
Han Xiao~\orcidID{0000-0002-7893-4028} \and
Qizhi Fang}

\authorrunning{Lu et al.}

\institute{Ocean University of China, Qingdao 266100, China\\
\email{tlu@stu.ouc.edu.cn},
\email{\{hxiao,qfang\}@ouc.edu.cn}
}

\maketitle

\begin{abstract}
We study the approximate core for edge cover games, which are cooperative games stemming from edge cover problems.
In these games, each player controls a vertex on a network $G=(V,E;w)$, and the cost of a coalition $S\subseteq V$ is equivalent to the minimum weight of edge covers in the subgraph induced by $S$.
We prove that the $\frac{3}{4}$-core of edge cover games is always non-empty and can be computed in polynomial time by using linear program duality approach.
This ratio is the best possible, as it represents the integrality gap of the natural LP for edge cover problems.
Moreover, our analysis reveals that the ratio of approximate core corresponds with the length of the shortest odd cycle of underlying graphs.
\keywords{Edge cover game \and approximate core \and linear program duality \and integrality gap.}
\end{abstract}

\section{Introduction} \label{intro}
Game theory studies the decision-making of rational, self-interested agents in strategic environments~\cite{OR94}.
Cooperative game theory is the branch of game theory which studies situations where players are able to making binding agreements about the distribution of payoffs outside the rules of the game~\cite{Pel07}.
One central problem in cooperative game theory is to distribute the total cost of cooperation to its participants.
There are many criteria for evaluating allocations~\cite{CEM11}~\cite{Pel07}, such as stability, fairness, and satisfaction.
Emphases on different criteria lead to different allocations, e.g., the core, the stable set, the Shapley value, the nucleon and the nucleolus.

The core~\cite{GD59}, which addresses the issue of stability, is one of the most attractive solution concepts in cooperative game theory.
The allocations in core are stable in the sense that no subset of players has an incentive to deviate from the grand coalition.
The approximate core, which is introduced by Aigle and Kern~\cite{Fai93}, provides an alternative solution for stability.
Unlike the core which can be empty, it offers an approximation to the core and is always existent, as long as the approximation ratio is bad enough.
Moreover, the approximate core captures a wider range of solution concepts compared to the core,
and it eventually reduces to the core when the approximation ratio equals one.
In the study of approximate core, a central problem is to determine the best ratio so that the approximation to the core is as close as possible.
This problem has been widely discussed for a number of cooperative games, such as matching games~\cite{Fai93}~\cite{Vaz21}~\cite{Xiao21}, TSP games~\cite{Pot92}~\cite{Fa98}~\cite{Sun15}, bin packing games~\cite{FK98}~\cite{KQ12}~\cite{Qiu16} and facility location games~\cite{Goe04}~\cite{KA83}.

Edge cover games was first studied by Deng et al.~\cite{DIN99} to model the cost allocation problem arising from edge cover problems.
From Gallai's Theorem, the core of edge cover game can be represented by the core of matching games.
In this sense, the core of the edge cover game may be empty by using of the fact that the core of the matching games may be empty.
On the other hand, the convex of the core can not be represented by the set of maximum independent sets on the underlying graph.

In another work, Liu and Fang~\cite{LF07} studied a variant of edge cover games and provided a complete characterization for the core and a sufficient condition for verifying the non-emptiness of the core.
In a follow-up work, Kim~\cite{KH16} studied rigid fractional edge cover games and its relaxed games.
They showed that a characterization of the cores of both games and found relationships between them.
Park et al.~\cite{PKK13} also studied different variants of edge cover games,
including rigid k-edge cover games and its relaxed games.
They gave a characterization of the cores of both games, found relationships between them,
and gave necessary and sufficient conditions for the balancedness of both of them.

In this work, we study edge cover games and present a characterization for the approximate core by employing the integrality gap of the underlying problem.
Our analysis demonstrates that the best ratio of the approximate core is upper bounded by the reciprocal of integrality gap.
Consequently, the most promising ratio for guaranteeing the non-emptiness of approximate core in edge cover games is $\frac{3}{4}$.
Additionally, we illustrate that it is always feasible to construct an allocation in the $\frac{3}{4}$-core of edge cover games efficiently.

The rest of this work is organized as follows.
In Section~\ref{sec:pre}, some notions and notations used in this paper are introduced.
Section~\ref{sec:main} is devoted to a characterization for the approximate core of edge cover games with the integrality gap.
Section~\ref{sec:conclusion} gives the concluding remark and some possible future research for the edge cover games.

\section{Preliminaries} \label{sec:pre}

A cooperative cost game $\Gamma (N,c)$ consists of a \emph{player set} $N=\{1,2,\ldots,n\}$ and a \emph{characteristic function} $c: 2^N \rightarrow \mathbb{R}$, where for each \textit{coalition} $S\subseteq N$, $c(S)$ represents the cost incurred by the players in $S$.
The \textit{core} of the game $\Gamma (N,c)$ is the set of vectors $a\in \mathbb{R}_+^{|N|}$ satisfying:
\begin{equation}
\label{eq:def-core}
  \begin{split}
  & a(S)\leq c(S) \text{~~for all~~} S \subseteq N, \\
  & a(N) = c(N).
  \end{split}
\end{equation}
where $a(S)= \sum\nolimits_{i\in S}a_i$.
We say a vector $a\in \mathbb{R}_+^{|N|}$ satisfies the \textit{core property} if $a(S)\leq c(S)$ for any $S \subseteq N$.
Given $0\leq \alpha \leq 1$, the \textit{$\alpha$-core} of the game $\Gamma (N,c)$ is the set of vectors $a\in \mathbb{R}_+^{|N|}$ satisfying:
\begin{equation}
\label{eq:def-approx-core}
  \begin{split}
  & a(S)\leq c(S) \text{~~for all~~} S \subseteq N, \\
  & a(N) \geq \alpha c(N).
  \end{split}
\end{equation}
A vector in the $\alpha$-core guarantees that no coalition will cost more than the cost it makes on its own, and the total cost of that allocation is at least $\alpha$ times of the cost of all players.
It is appealing to find the largest value $\alpha$ guaranteeing the $\alpha$-core being non-empty.
When the core is non-empty, the core is precisely the $\alpha$-core for $\alpha=1$.

Let $G=(V,E)$ be an undirected graph with vertices set $V$ and edges set $E$.
For any non-empty set $U\subseteq E$, the induced subgraph on $U$, denoted by $G[U]$, is a subgraph of $G$ with edges in $U$.
For any vertex subset $S\subseteq V$, $\delta(S)$ denotes the set of edges incident to exactly one vertex in $S$.
If $S$ contains a single vertex $v$, we use $\delta(\{v\})$ as an abbreviation for $\delta(v)$.

An \textit{edge cover} of $G$ is a set of edges $K \subseteq E$ such that $\delta(v) \cap K \neq \emptyset$ for any $v \in V$.
Given a non-negative weight function $w$ on $E$ that assigns a cost to each edge,
the \textit{minimum weight edge cover problem} aims to find an edge cover such that the total weight of edges is minimized.
The value of a minimum weight edge cover, denoted by $\gamma(G,w)$, is called the \emph{weighted edge cover number}.

Edge cover games study how to allocate the total cost of the edge cover among all players.
More precisely, $\Gamma_G(V,c)$ is the \textit{edge cover game} defined on an edge-weighted graph $G=(V,E;w)$.
The player set of $\Gamma_G(V,c)$ consists of the vertices in $V$.
For any coalition $S\subseteq V$,
the \textit{cost function} $c:2^V \rightarrow \mathbb{R}_+$ is defined by the minimum weight of edge set covering $S$.
In other words, $c(S)=\gamma(G[E[S]\cup \delta (S)],w)$ where $E[S]$ denotes set of edges that both endpoints are contained in $S$.

\section{The approximate core of edge cover games}\label{sec:main}

It is showed that the core of an edge cover game is non-empty if and only if there is no integrality gap for the underlying problem~\cite{DIN99}.
It turns out that the approximate core of edge cover games also admits a characterization with the integrality gap of the underlying problem.
Moreover, the largest ratio guaranteeing the approximate core being non-empty is upper bounded by the reciprocal of integrality gap.
Hence the problem of finding the largest ratio for the approximate core boils down to computing the integrality gap.
This section is threefold.
Subsection~\ref{sec:frac-edge-cover} shows how to compute an optimal half-integral edge cover.
Subsection~\ref{sec:inte-gap} utilizes the fractional edge cover computed in Subsection~\ref{sec:frac-edge-cover} to prove the integrality gap of edge cover problems.
Subsection~\ref{sec:prop-core} uses the integrality gap of edge cover problems to characterize the approximate core of edge cover games.

\subsection{Computing an optimal half-integral edge cover}\label{sec:frac-edge-cover}
To compute the integrality gap of edge cover problems, we resort to a class of special fractional edge covers, the optimal half-integral edge covers.
We show that an optimal half-integral edge cover can always be found efficiently.
Formally, a vector $x$ is called \textit{half-integral} if $2x$ is integral.

The following linear program~\eqref{lp:frac-covering} captures the minimum weight edge cover problem on $G=(V,E;w)$ by restricting variables to $0$ and $1$.

\begin{mini!}
	{}{\sum_{e\in E} w_e x_e}
  {\label{lp:frac-covering}}{}
	\addConstraint { \sum\limits_{e\in \delta(v)} x_e}{\geq 1 \quad}{v\in V \label{lp:frac-covering:c1}}
	\addConstraint { x_e }{\geq 0 \quad}{e\in E. \label{lp:frac-covering:c2}}
\end{mini!}

A feasible solution to LP~\eqref{lp:frac-covering} is called a \textit{fractional edge cover} in $G$.
An optimal solution to LP~\eqref{lp:frac-covering} is called a \textit{minimum fractional edge cover} in $G$.
In the case of bipartite graphs, the weight of minimum fractional edge cover has the same value as weighted edge cover number $\gamma(G,w)$.

\begin{lemma}[Schrijver~\cite{Sch03}] \label{lem:bip-half-integer}
If $G$ is a bipartite graph, then LP~\eqref{lp:frac-covering} has an integral optimal solution.
\end{lemma}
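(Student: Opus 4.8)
The plan is to recognize the constraint system of LP~\eqref{lp:frac-covering} and appeal to total unimodularity. Let $M \in \{0,1\}^{V \times E}$ be the vertex--edge incidence matrix of $G$, with $M_{v,e}=1$ exactly when $v$ is an endpoint of $e$. Then the covering constraints~\eqref{lp:frac-covering:c1} read $Mx \geq \mathbf{1}$, and the feasible region is the polyhedron $P = \{x \in \mathbb{R}^E : Mx \geq \mathbf{1},\ x \geq 0\}$. Since the right-hand side $\mathbf{1}$ is integral, it suffices to show that $M$ is totally unimodular; the Hoffman--Kruskal theorem then guarantees that every vertex of $P$ is integral, and the lemma follows once we argue the optimum is attained at a vertex.

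To prove total unimodularity, I would show by induction on $k$ that every $k \times k$ submatrix $N$ of $M$ has determinant in $\{0, +1, -1\}$. The base case $k=1$ is immediate since each entry is $0$ or $1$. For the inductive step, observe that each column of $M$ has exactly two ones (the endpoints of the corresponding edge), so each column of $N$ has at most two ones, and I would split into three cases. If some column of $N$ is all zero, then $\det N = 0$. If some column has exactly one nonzero entry, expanding the determinant along that column reduces to a $(k-1) \times (k-1)$ submatrix of $M$, whose determinant lies in $\{0,\pm 1\}$ by the induction hypothesis, so $\det N \in \{0,\pm 1\}$ as well. The decisive case is when every column of $N$ has exactly two ones, and this is exactly where bipartiteness enters.

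For the decisive case, fix a bipartition $V = A \cup B$. Every edge has one endpoint in $A$ and one in $B$, so in each column of $N$ the two ones lie in an $A$-row and a $B$-row respectively. Summing the $A$-rows of $N$ and separately summing the $B$-rows therefore yields the same all-ones vector, so the rows of $N$ are linearly dependent and $\det N = 0$. This completes the induction and establishes that $M$ is totally unimodular. I expect this two-ones case to be the only genuinely nontrivial step, since it is here that bipartiteness forces the linear dependence; the other two cases are routine.

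Finally, because $P$ lies in the nonnegative orthant it is pointed, and since $w \geq 0$ the objective is bounded below on $P$ (every extreme ray $r \geq 0$ satisfies $w^{\top} r \geq 0$). Hence whenever $P$ is nonempty the minimum is attained at a vertex of $P$, which is integral by the total unimodularity of $M$ together with the integrality of $\mathbf{1}$. When $G$ has an isolated vertex the polyhedron $P$ is empty and the statement holds vacuously, so this case needs no separate treatment.
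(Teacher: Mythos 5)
The paper does not prove this lemma at all: it is quoted from Schrijver's monograph as a known black-box fact, so there is no in-paper argument to compare against. Your proof is correct and is precisely the standard argument underlying that citation: total unimodularity of the bipartite vertex--edge incidence matrix (established by the determinant induction, with the $+1/-1$ weighting of the $A$-rows and $B$-rows giving the linear dependence in the two-ones case), Hoffman--Kruskal integrality of $\{x : Mx \ge \mathbf{1},\, x \ge 0\}$, and attainment of the bounded objective at a vertex of the pointed polyhedron. The edge cases you address explicitly (all-zero and single-one columns, and the vacuously infeasible isolated-vertex situation) are exactly the ones that need to be checked, so the argument is complete; it supplies a self-contained justification for what the paper merely asserts.
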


In the following, we show how to obtain an optimal fractional solution to LP~\eqref{lp:frac-covering} that is half-integral.
We employ the technique of edge doubling proposed by Nemhauser and Trotter~\cite{NT75},
whereby we create two copies of the vertex set $V$, denoted as $V^{\prime}$ and $V^{\prime \prime}$,
such that each vertex $v\in V$ corresponds to $v^{\prime}\in V^{\prime}$ and $v^{\prime\prime}\in V^{\prime\prime}$.
Next, we construct the graph $G'=(V'\cup V'',E')$, where $E'=\{u^{\prime}v^{\prime\prime}|uv\in E\}\cup \{u^{\prime\prime}v^{\prime}|uv\in E\}$,
and assign weight $w_{uv}$ to each of edges $u'v''\in E'$ and $u''v'\in E'$.
Since $G'$ is bipartite, it is possible to efficiently compute a minimum edge cover of $G'$, denoted as $F$.
We set $\overline{x}=1_F$, meaning that $\overline{x}_e=1$ if $e\in F$ and $\overline{x}_e=0$ otherwise.

Define $x^*$ from $\overline{x}$ by
\begin{equation}\label{eq:opt-half-integral}
x^*_{uv}=\frac{1}{2}\left(\overline{x}_{u'v''}+\overline{x}_{u''v'}\right) \text{~~for all~~} uv\in E.
\end{equation}
The following lemma shows that $x^*$ defined in~\eqref{eq:opt-half-integral} is an optimal half-integral edge cover of $G$.

\begin{lemma} \label{lem:half-integer-x}
$x^*$ is an optimal fractional edge cover of $G$.
\end{lemma}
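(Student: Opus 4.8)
The plan is to verify the two defining properties of an optimal solution to LP~\eqref{lp:frac-covering} separately: that $x^*$ is \emph{feasible} (a fractional edge cover of $G$), and that its objective value equals the minimum. The bridge between $G$ and the bipartite double $G'$ is a simple correspondence: writing each edge of $G$ incident to $v$ as $vb$, its two copies in $G'$ are $v'b''$ and $v''b'$, and these are precisely the edges of $\delta(v')$ and $\delta(v'')$, respectively. Thus every edge incident to $v$ has exactly one copy meeting $v'$ and one meeting $v''$, with no other copies of $G'$ touching either.

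For feasibility, fix $v\in V$ and sum the two covering constraints of $F$ at $v'$ and $v''$. Using the correspondence together with the definition~\eqref{eq:opt-half-integral},
\begin{equation*}
\sum_{f\in\delta(v')}\overline{x}_f+\sum_{f\in\delta(v'')}\overline{x}_f
=\sum_{b:\,vb\in E}\bigl(\overline{x}_{v'b''}+\overline{x}_{v''b'}\bigr)
=2\sum_{e\in\delta(v)}x^*_e .
\end{equation*}
Since $F$ is an edge cover of $G'$, each sum on the left is at least $1$, so the right-hand side is at least $2$, which is exactly constraint~\eqref{lp:frac-covering:c1} at $v$. Constraint~\eqref{lp:frac-covering:c2} is immediate, and because each $x^*_e$ is an average of two $0/1$ values we also get $x^*_e\in\{0,\tfrac12,1\}$, so $x^*$ is half-integral.

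For optimality I would argue by two matching inequalities. First, since both copies of $e$ carry weight $w_e$, the objective value of $x^*$ equals $\tfrac12 w(F)$, where $w(F)$ is the weight of the minimum edge cover of $G'$; feasibility already shows this value is at least the minimum fractional edge cover weight of $G$. For the reverse inequality, take an optimal fractional edge cover $y$ of $G$ and lift it to $G'$ by assigning the value $y_e$ to \emph{both} copies of each $e$; the same correspondence shows this lift is a feasible fractional edge cover of $G'$, and its weight is exactly twice that of $y$. As $G'$ is bipartite, Lemma~\ref{lem:bip-half-integer} guarantees that its minimum fractional edge cover is attained integrally, so $w(F)$ is at most the weight of the lift, whence $\tfrac12 w(F)$ is at most the minimum fractional edge cover weight of $G$. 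The two bounds coincide, so $x^*$ attains the fractional optimum.

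The only delicate point is the edge-to-copy correspondence underlying both parts: one must check that summing over $\delta(v')$ and $\delta(v'')$ accounts for each incident edge exactly twice in total — once through each of its copies — so that the factor $2$ is exact, with no double counting and no copy left unassigned. Everything else is direct substitution, and once this correspondence is pinned down both the feasibility inequality and the weight-preserving lift fall out of the same identity, so I anticipate no substantive obstacle.
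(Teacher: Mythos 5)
Your proof is correct and follows essentially the same route as the paper's: feasibility by summing the covering constraints of $F$ at $v'$ and $v''$, and optimality by lifting a fractional edge cover of $G$ to both copies of each edge in $G'$ (the paper phrases this as a proof by contradiction rather than two matching inequalities). If anything, your version is slightly more complete, since you explicitly invoke Lemma~\ref{lem:bip-half-integer} to justify comparing the minimum \emph{integral} edge cover $F$ of the bipartite graph $G'$ with the \emph{fractional} lift, a step the paper's chain of inequalities uses only implicitly.
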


\begin{proof}
We first show that $x^*$ is feasible for LP~\eqref{lp:frac-covering}.
For any vertex $v\in V$, we have
\begin{equation}
  \sum\nolimits_{uv \in \delta(v)} x^*_{uv}=\frac{1}{2} \sum\nolimits_{u''v' \in \delta(v')} \overline{x}_{u''v'}+\frac{1}{2} \sum\nolimits_{u'v'' \in \delta(v'')} \overline{x}_{u'v''} \geq 1,
\end{equation}
which implies the feasibility of $x^*$.
If $x^*$ is not an optimal half-integral edge cover in $G$,
there will be an optimal fractional edge cover $z$ in $G$ such that $\sum_{e \in E} w_ez_e<\sum_{e \in E} w_ex^*_e$.
Then, we define a feasible edge cover $\overline{z}$ in $G'$:
\begin{equation}
  \overline{z}_{u'v''}=\overline{z}_{u''v'}=z_{vu} \text{~~for all~~} uv\in E.
\end{equation}
This reduction get feasible of $\overline{z}$ and we have
\begin{equation}
\begin{split}
2\sum_{e \in E} w_ex^*_e&=\sum\nolimits_{u'v''\in E'}{w_{uv}\overline{x}_{u'v''}}+\sum\nolimits_{u''v'\in E'}{w_{uv}\overline{x}_{u''v'}}\\
&\leq \sum\nolimits_{u'v''\in E'}{w_{uv}\overline{z}_{u'v''}}+\sum\nolimits_{u''v'\in E'}{w_{uv}\overline{z}_{u''v'}}\\
&=2\sum_{e \in E} w_ez_e
\end{split}
\end{equation}
which contradicts with the assumption $\sum_{e \in E} w_ez_e<\sum_{e \in E} w_ex^*_e$.
\qed
\end{proof}

We can adjust the half-integral edge cover in the Lemma~\ref{lem:half-integer-x} so that the subgraph induced by fractional components of $x^*$ can be decomposed into vertex-disjoint odd cycles.
This can be achieved by rounding $x^*$ iteratively.

\begin{lemma} \label{lem:mod-half-integer-x}
There exists an optimal half-integral edge cover $\tilde{x}$ such the subgraph induced by fractional components of $\tilde{x}$ can be decomposed into vertex-disjoint odd cycles.
\end{lemma}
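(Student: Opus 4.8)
The plan is to select, among all optimal half-integral edge covers with entries in $\{0,\tfrac12,1\}$ (a nonempty set by Lemma~\ref{lem:half-integer-x}), one that minimizes the number of edges $e$ with $x_e=\tfrac12$, and to prove that for this extremal solution the fractional support $H=\{e\in E: x_e=\tfrac12\}$ is already a vertex-disjoint union of odd cycles. All reductions are obtained by \emph{weight shifting}: along an alternating sequence of edges of $H$ I push $+\tfrac12$ and $-\tfrac12$ in turn, so that every edge touched leaves $H$ (its value becomes $0$ or $1$) and the solution stays in $\{0,\tfrac12,1\}^E$.

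First I would record the local structure forced by feasibility. If $v$ is incident to a fractional edge, then $\sum_{e\in\delta(v)}x_e\ge 1$ together with half-integrality leaves only two possibilities: either $v$ is \emph{tight}, meaning $\sum_{e\in\delta(v)}x_e=1$, which happens exactly when $v$ has two fractional edges and no integral edge; or $v$ is \emph{slack}, meaning $\sum_{e\in\delta(v)}x_e\ge \tfrac32$. A short case analysis on the fractional degree refines this: every slack vertex has slack at least $\tfrac12$, and the slack is at least $1$ unless the fractional degree equals $1$ or $3$. These bounds are exactly what the shifting argument will consume.

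Next I would establish the two reduction moves. (i) For an even cycle in $H$ both orientations of the shift are feasible, because each vertex receives one $+\tfrac12$ and one $-\tfrac12$ and its constraint is unchanged. (ii) For a simple path whose internal vertices are all tight and whose two endpoints are slack, the internal constraints are again preserved, while each slack endpoint absorbs the single $\pm\tfrac12$ it receives, for which slack $\ge\tfrac12$ suffices. In either case, since the reversed shift is also feasible and $x$ has minimum weight, the weight change must be $0$; the move therefore produces another optimal half-integral cover with strictly fewer fractional edges, contradicting minimality. Consequently the extremal $x$ admits no even cycle and no tight-internal path between two slack vertices.

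Finally I would argue that this forces every vertex of $H$ to be tight, so that $H$ has all degrees equal to $2$ and decomposes into cycles, necessarily odd by (i). Suppose some component contained a slack vertex $u$. Following any fractional edge of $u$ through tight vertices (each of degree exactly $2$, so the continuation is forced) yields a path that must terminate at a slack vertex; if it reaches a slack vertex other than $u$ we apply (ii), and if it closes up at $u$ we obtain a cycle through $u$ with all other vertices tight. The main obstacle is precisely this closed, odd case: here the alternating shift leaves both edges at $u$ with the same sign, so feasibility of the reversal needs $u$ to absorb $\pm 1$. This is where the refined slack bounds enter: a degree-$2$ slack vertex has slack $\ge 1$, so the closing shift goes through; a degree-$3$ slack vertex has a third fractional edge whose forced path cannot return to $u$ and hence reaches a distinct slack vertex, putting us back in case (ii). Either way a reduction exists, contradicting minimality, so no slack vertex survives and $H$ is a vertex-disjoint union of odd cycles, as required.
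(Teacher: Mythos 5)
Your proposal is correct, and it reorganizes the argument rather than copying the paper's, even though the engine is the same: both proofs round by pushing alternating $\pm\frac12$ along paths and even cycles of the fractional support $H$, use slack at the endpoints for feasibility, and use the averaging identity $w\tilde{x}=\frac12(wx'+wx'')$ together with optimality to conclude that both rounded vectors are again optimal. The paper runs this as an iterative procedure: it repeatedly takes a \emph{longest} path of $H$ (so that each endpoint's only fractional edge is its path edge, forcing an incident edge of value $1$ and hence slack at least $\frac12$), eliminates paths and even cycles, and then needs a separate cleanup step to make the surviving odd cycles vertex-disjoint (merging two edge-disjoint odd cycles sharing a vertex into an even closed walk, and rounding the longest common path of two cycles sharing edges). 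You instead make a single extremal choice --- an optimal $\{0,\frac12,1\}$-solution minimizing the number of fractional edges --- classify vertices as tight or slack, and prove the stronger statement that every vertex of $H$ is tight; this subsumes the paper's cleanup, since a vertex lying on two cycles has fractional degree at least $3$ and is therefore slack. The price is the extra case of an odd cycle closing up at a slack vertex $u$, where the shift hits $u$ with a full $\pm1$; your refined slack bounds (slack at least $1$ at fractional degree $2$, and a third escaping edge at fractional degree $3$) dispose of it correctly. The one spot that needs a sentence of justification is the assertion that the forced walk out of the third fractional edge of a degree-$3$ slack vertex cannot return to $u$: this holds because every tight vertex of the already-closed cycle has both of its fractional edges on that cycle, so the new walk never meets that cycle away from $u$, hence cannot re-enter $u$ through either cycle edge, and it cannot retraverse its own first edge since the walk is simple on tight vertices --- but as written this is only asserted, not argued.
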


\begin{proof}
Initially, we set $\tilde{x}$ equal to $x^*$ and describe a procedure that generates another optimal solution with strictly more integer coordinates than $\tilde{x}$.
Let $H$ be the subgraph of $G$ induced by the set of edges $\{e\in E|\tilde{x}_e=\frac{1}{2}\}$.
First, we round $\tilde{x}$ to eliminate all of paths and even cycles in $H$.

Let $P=v_1v_2\ldots v_k$ be the longest path in $H$.
Note that if $e$ is an edge incident to $v_1$ and different from $v_1v_2$, then $x_e\neq \frac{1}{2}$;
otherwise, $H$ would contain either a cycle or a longer path.
Therefore, the edge $v_1v_2$ is the only edge connected to $v_1$ that has a half-integral value on $\tilde{x}$.
As $\tilde{x}$ is feasible for LP~\eqref{lp:frac-covering},
at least one edge incident to $v_1$ has a value of 1 in $\tilde{x}$.
Thus we have $\tilde{x}(\delta(v_1))\geq \frac{3}{2}$.
Likewise, we can deduce that $\tilde{x}(\delta(v_k))\geq \frac{3}{2}$.
Define $x^{\prime}$ and $x^{\prime\prime}$ as follows:
$$
x_e^{\prime}= \begin{cases}\tilde{x}_e-\frac{1}{2}, & \text { if } e=v_i v_{i+1}, 1 \leq i \leq k-1 \text { and } i \text { is odd, } \\ \tilde{x}_e+\frac{1}{2}, & \text { if } e=v_i v_{i+1}, 1 \leq i \leq k-1 \text { and } i \text { is even, } \\ \tilde{x}_e, & \text { if } e \notin E(P),\end{cases}
$$
and
$$
x_e^{\prime \prime}= \begin{cases}\tilde{x}_e+\frac{1}{2}, & \text { if } e=v_i v_{i+1}, 1 \leq i \leq k-1 \text { and } i \text { is odd, } \\ \tilde{x}_e-\frac{1}{2}, & \text { if } e=v_i v_{i+1}, 1 \leq i \leq k-1 \text { and } i \text { is even, } \\ \tilde{x}_e, & \text { if } e \notin E(P).\end{cases}
$$
There are two admissible solutions to LP~\eqref{lp:frac-covering}.
Moreover,
$$
\sum_{e \in E} w_e\tilde{x}_e=\frac{1}{2}\left(\sum_{e \in E} w_ex_e^{\prime}+\sum_{e \in E} w_ex_e^{\prime \prime}\right)
$$
Vectors $x'$ and $x''$ have integer coordinates in $P$, and share the same coordinates with $\tilde{x}$ on the other edges.
As $\tilde{x}$ is an optimal fractional edge cover, $x^{\prime}$ and $x^{\prime\prime}$ are also optimal solutions.

For any even cycle $C=v_1v_2\ldots v_k$ in $H$ with $v_1=v_k$,
we can use similar method to round $\tilde{x}$.
Define $x^{\prime}$ and $x^{\prime\prime}$ as follows:
$$
x_e^{\prime}= \begin{cases}\tilde{x}_e-\frac{1}{2}, & \text { if } e=v_i v_{i+1}, 1 \leq i \leq k-1 \text { and } i \text { is odd, } \\ \tilde{x}_e+\frac{1}{2}, & \text { if } e=v_i v_{i+1}, 1 \leq i \leq k-1 \text { and } i \text { is even, } \\ \tilde{x}_e, & \text { if } e \notin E(C),\end{cases}
$$
and
$$
x_e^{\prime \prime}= \begin{cases}\tilde{x}_e+\frac{1}{2}, & \text { if } e=v_i v_{i+1}, 1 \leq i \leq k-1 \text { and } i \text { is odd, } \\ \tilde{x}_e-\frac{1}{2}, & \text { if } e=v_i v_{i+1}, 1 \leq i \leq k-1 \text { and } i \text { is even, } \\ \tilde{x}_e, & \text { if } e \notin E(C).\end{cases}
$$
There are two admissible solutions to LP~\eqref{lp:frac-covering}.
Moreover,
$$
\sum_{e \in E} w_e\tilde{x}_e=\frac{1}{2}\left(\sum_{e \in E} w_ex_e^{\prime}+\sum_{e \in E} w_ex_e^{\prime \prime}\right)
$$
Thus $x^{\prime}$ and $x^{\prime\prime}$ are also optimal solutions, have integer coordinates in $C$ and share the same coordinates with $\tilde{x}$ on the other edges.

We continue this process until $H$ does not contain any path or even cycle.
Next, we proof that any two odd cycles in $H$ are vertex-disjoint.
If two odd cycles in $H$ are vertex-disjoint but not edge-disjoint, we can combine them into an even cycle and then round it.
Therefore, to prove that any two odd cycles are vertex-disjoint, it is sufficient to show that they are edge-disjoint.
Suppose that $H$ contains two cycles $C_1$ and $C_2$.
Let $P=v_1v_2\ldots v_k$ be the longest path belong to $C_1\cap C_2$.
We use the same method to obtain two vectors $x^{\prime}$ and $x^{\prime\prime}$ which have integer coordinates on $P$.
Then, we can replace $\tilde{x}$ by $x^{\prime}$ or $x^{\prime\prime}$ and continue this procedure until any two odd cycles in $H$ are edge-disjoint.
\qed
\end{proof}

\subsection{Integrality gap of edge cover problems} \label{sec:inte-gap}
This subsection studies the integrality gap of edge cover problems which will be used in characterizing the approximate core for edge cover games.
The \textit{edge cover polytope} of $G$, denoted by $\textrm{IP}(G)$, is the convex hull of incidence vectors of all edge covers of $G$.
The \textit{fractional edge cover polytope} of $G$, denoted by $\textrm{P}(G)$, is the convex hull of all fractional edge covers of $G$.
It follows that $\textrm{P}(G)$ is precisely the polytope defined by constraints~\eqref{lp:frac-covering:c1} and~\eqref{lp:frac-covering:c2}.
According to Edmonds~\cite{EJ70}, $\textrm{IP}(G)$ can be described by $\textrm{P}(G)$ after imposing the following odd set constraints:
\begin{equation} \label{eq:odd-set}
  x(E[U]\cup \delta(U))\geq \lceil \frac{1}{2}|U| \rceil \text{~~for all~~} U \subseteq V,~ |U| \text{ odd}.
\end{equation}

The \textit{integrality gap} of the edge cover problem on $G$, denoted by $\rho(G)$, is defined by
\begin{equation}\label{eq:integrality-gap}
  \rho(G)=\max\limits_{w:\mathbb{R}^{|E|} \rightarrow \mathbb{R}_+}\frac{\min\{wx:x\in \textrm{IP}(G)\}}{\min\{wx:x\in \textrm{P}(G)\}}.
\end{equation}
We have the following result for $\rho(G)$.
\begin{theorem} \label{thm:integrality-gap}
  Let $G=(V,E)$ be a graph.
  Then, $\rho (G) = 1 + \frac{1}{\ell(G)}$, where $\ell(G)$ is the length of the shortest odd cycle in $G$.
  Moreover, if $G$ is bipartite, then $\rho (G) = 1$.
\end{theorem}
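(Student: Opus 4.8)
The plan is to prove the equality by establishing the two inequalities $\rho(G)\le 1+\tfrac{1}{\ell(G)}$ and $\rho(G)\ge 1+\tfrac{1}{\ell(G)}$ separately, reading the bipartite case as the degenerate limit $\ell(G)=\infty$. The bipartite statement comes essentially for free: by Lemma~\ref{lem:bip-half-integer} the program LP~\eqref{lp:frac-covering} always attains an integral optimum, so for every weight $w$ the numerator and denominator of~\eqref{eq:integrality-gap} coincide and $\rho(G)=1$, matching $1+\tfrac{1}{\infty}$.

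For the upper bound $\rho(G)\le 1+\tfrac{1}{\ell(G)}$ I would fix an arbitrary weight $w$ and start from the structured optimum $\tilde x$ supplied by Lemma~\ref{lem:mod-half-integer-x}, whose half-valued edges decompose into vertex-disjoint odd cycles $C_1,\dots,C_m$. I would round $\tilde x$ to an integral cover $\hat x$ by leaving all integral coordinates untouched and, on each cycle $C_j$, choosing a maximum-weight matching $M_j$ of $C_j$ and raising the complementary edges $E(C_j)\setminus M_j$ to value $1$. Because the complement of a matching in a cycle covers every cycle vertex and the cycles are vertex-disjoint from the rest, $\hat x$ is a genuine edge cover and all loss is confined to the $C_j$. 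The key local estimate is $w\bigl(E(C_j)\bigr)-w(M_j)\le\bigl(1+\tfrac{1}{|C_j|}\bigr)\cdot\tfrac12\,w\bigl(E(C_j)\bigr)$, that is, the rounded cost on $C_j$ is at most $1+\tfrac{1}{|C_j|}$ times the fractional cost $\tfrac12\,w(E(C_j))$ carried there by $\tilde x$.

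I would obtain this estimate by an averaging argument: an odd cycle of length $\ell$ has exactly $\ell$ maximum matchings (each exposing one vertex), and by symmetry every edge lies in precisely $\tfrac{\ell-1}{2}$ of them, so the average matching weight equals $\tfrac{\ell-1}{2\ell}\,w(E(C_j))$ and hence $w(M_j)$ is at least this much; substituting gives the displayed bound. Since $|C_j|\ge\ell(G)$ for every $j$, each factor is at most $1+\tfrac{1}{\ell(G)}$, and summing over the cycles together with the untouched integral part yields $w\hat x\le\bigl(1+\tfrac{1}{\ell(G)}\bigr)\,w\tilde x$. As $\tilde x$ is a fractional optimum and $\hat x$ a feasible integral cover, this bounds the ratio in~\eqref{eq:integrality-gap} uniformly in $w$, proving the upper bound.

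For the matching lower bound I would exhibit a weight forcing the gap. Taking a shortest odd cycle $C$ of length $\ell=\ell(G)$ and setting $w_e=1$ on $E(C)$, the assignment $x_e=\tfrac12$ on $C$ is fractionally optimal there with value $\tfrac{\ell}{2}$, whereas any integral cover of the $\ell$ cycle vertices needs at least $\tfrac{\ell+1}{2}$ unit edges, giving the local ratio $\tfrac{\ell+1}{\ell}=1+\tfrac{1}{\ell}$. The hard part will be controlling everything outside $C$: vertices and edges not on the cycle must still be covered, and if a cycle vertex can be covered cheaply through an incident non-cycle edge then its constraint becomes slack and the cycle's gap collapses. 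I would therefore have to choose the weights on $E\setminus E(C)$ so that the optimal fractional and integral covers agree off the cycle, never cover a cycle vertex from outside, and contribute equally to numerator and denominator; verifying that such an extension exists, so that the cycle's gap survives in the \emph{global} optimum, is the delicate step, and in the cleanest case one reduces to the instance $G=C$ itself, which already certifies that the ratio $1+\tfrac{1}{\ell(G)}$—and in particular the value $\tfrac{4}{3}$ at $\ell(G)=3$—is best possible.
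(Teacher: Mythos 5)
Your upper bound is correct and takes a genuinely different route from the paper. The paper invokes the Carr--Vempala criterion (Lemma~\ref{lem:gap-condition}) and verifies that $(1+\frac{1}{\ell(G)})\tilde{x}$ satisfies Edmonds' odd-set constraints~\eqref{eq:odd-set}, which requires the external description of the edge cover polytope. You instead round $\tilde{x}$ from Lemma~\ref{lem:mod-half-integer-x} directly to an integral cover: on each odd cycle you take the complement of a heaviest near-perfect matching, and your averaging argument is sound --- an $\ell$-cycle has exactly $\ell$ near-perfect matchings, each edge lies in $\frac{\ell-1}{2}$ of them, so the best one has weight at least $\frac{\ell-1}{2\ell}w(E(C_j))$ and the complement costs at most $\frac{\ell+1}{\ell}\cdot\frac{1}{2}w(E(C_j))$. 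Feasibility of $\hat{x}$ also checks out: the complement of any matching in a cycle covers every cycle vertex, and a vertex lying on no cycle meets no half-valued edge, hence is covered by an integral coordinate of $\tilde{x}$. This argument is more elementary and self-contained than the paper's (no polyhedral machinery beyond LP~\eqref{lp:frac-covering}), and it produces an explicit integral cover within the claimed factor; both proofs lean equally on Lemma~\ref{lem:mod-half-integer-x}.

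Your lower bound is where the real gap sits, and the worry you flag is not a technicality you left for later --- it is fatal to the equality as stated, and the paper's own proof silently commits the error you are trying to avoid. The paper sets $w=1_{C^*}$ and asserts that the integral and fractional minima are $\frac{|C^*|+1}{2}$ and $\frac{|C^*|}{2}$; this presumes cycle vertices cannot be covered by zero-weight edges leaving $C^*$. No choice of weights on $E\setminus E(C)$ can always rescue this: take $G$ to be a triangle with one pendant vertex attached to each triangle vertex. Every fractional or integral edge cover must assign value at least $1$ to each pendant edge, which already covers the triangle, so the LP and IP optima in~\eqref{eq:integrality-gap} coincide for every $w\geq 0$ and $\rho(G)=1$, not $\frac{4}{3}$. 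Hence only the inequality $\rho(G)\leq 1+\frac{1}{\ell(G)}$ holds for all $G$ (together with $\rho(G)=1$ in the bipartite case), with equality attained, e.g., when $G$ is itself an odd cycle; your closing remark that ``in the cleanest case one reduces to $G=C$'' is exactly the right diagnosis. The upper bound is all that Theorem~\ref{thm:appcore} needs for the existence of the $\frac{\ell(G)}{1+\ell(G)}$-core, but the tightness claim should be read as tightness over the class of graphs with odd girth $\ell$, not for each individual graph.
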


\begin{proof}
We employ the result of Carr and Vempala~\cite{DV00}.
A \textit{dominant} $D(P)$ of a polyhedron $P\subseteq \mathbb{R}^n$ is the set of points $y\in \mathbb{R}^n$ which dominates some vector $x\in P$,
i.e., $D(P)=\{y\in \mathbb{R}^n:\exists x\in P, y\geq x\}$.

\begin{lemma}[Carr and Vempala~\cite{DV00}] \label{lem:gap-condition}
Given a polyhedron $P$ and its convex hull of the integer points $Z$, the integrality gap of the linear programming on $P$ is $r$ if and only if $r\geq 1$ is the smallest real number such that for any point $x^*$ of $P$, $rx^*\in D(Z)$.
\end{lemma}

If $G$ is bipartite, $\rho(G)=1$ follows from Lemma~\ref{lem:bip-half-integer} directly.
Hence we assume that $G$ is non-bipartite.

We first show that $\rho(G)\geq 1+\frac{1}{\ell(G)}$.
Let $C^*$ be a shortest odd cycle in $G$.
We obtain that
\begin{equation}
  \begin{split}
    \rho(G) & =\max\limits_{w:\mathbb{R}^{|E|} \rightarrow \mathbb{R}_+}\frac{\min\{wx:x\in \textrm{IP}(G)\}}{\min\{wx:x\in \textrm{P}(G)\}} \\
    & \geq \frac{\min\{1_{C^*}x:x\in \textrm{IP}(G)\}}{\min\{1_{C^*}x:x\in \textrm{P}(G)\}} \\
    & = \frac{(|C^*|+1)/2}{|C^*|/2} \\
    & = 1+\frac{1}{\ell(G)}.
  \end{split}
\end{equation}

Here the second-to-last equality holds because the minimum edge cover of $C^*$ can be attained by any matching in $C^*$ that exposes exactly one vertex,
while the minimum fractional edge cover of $C^*$ corresponds to an half-integral edge cover.

Then we show that $\rho(G)\leq 1+\frac{1}{\ell(G)}$.
Let $\tilde{x}$ denote the optimal half-integral edge cover of $G$ constructed in Lemma~\ref{lem:mod-half-integer-x}.
Lemma~\ref{lem:gap-condition} implies that the condition $\rho(G)\leq 1+\frac{1}{\ell(G)}$ holds if and only if $(1+\frac{1}{\ell(G)})\tilde{x}$ belongs to $\textrm{IP}(G)$.
Since that $(1+\frac{1}{\ell(G)})\tilde{x}$ is a feasible fractional edge cover, we only need to show that it satisfies~\eqref{eq:odd-set}.
Let $H_1$ and $H_2$ be the subgraph of $G$ induced by the fractional and integral components of $\tilde{x}$ respectively.
Then $H_1$ consists of vertex-disjoint cycles by Lemma~\ref{lem:mod-half-integer-x}.
Moreover, by picking alternate edges in each path with a length greater than 3, we can assume that $H_2$ is composed of vertex-disjoint stars.
Let $U$ be any odd set of vertices in $G$.
For any components $K$ of $G[E[U]\cup \delta(U)]$, there are four possible cases:
\begin{enumerate}
\item $K$ be a star in $H_2[E[U]]$, then $\tilde{x}(E[K])=|E[K]|$,
\item $K$ be a star in $H_2[\delta(U)]$, then $\tilde{x}(E[K])=|E[K]|$,
\item $K$ be a path in $H_1[E[U]\cup \delta(U)]$, then $\tilde{x}(K)=\frac{1}{2}|E[K]|$, and
\item $K$ be an odd cycle in $H_1[E[U]\cup \delta(U)]$, then $\tilde{x}(K)=\frac{1}{2}|E[K]|$.
\end{enumerate}
If there is no odd cycle in $G[E[U]\cup \delta(U)]$, then we have
$$
\begin{array}{lcl}
\left(1+\frac{1}{\ell(G)}\right)\tilde{x}(E[U]\cup \delta(U))&\geq & \tilde{x}(E[U]\cup \delta(U))\\[3pt]
&=& |E[U]|+|\delta(U)|\\[3pt]
& \geq & \lceil \frac{1}{2}|U| \rceil.
\end{array}
$$
The last inequality bases on the observation that $\tilde{x}(K)\geq \lceil \frac{1}{2}|V[K]| \rceil$ when $K$ falls in the first three cases, where $V[K]$ denotes the set of vertices of $K$.

Otherwise, $H_1$ contains an odd cycle, thus $\ell(G)\leq |U|$.
It follows that
$$
\begin{array}{lcl}
\left(1+\frac{1}{\ell(G)}\right)\tilde{x}(E[U]\cup \delta(U))& \geq & \frac{|U|+1}{|U|}\tilde{x}(E[U]\cup \delta(U))\\[3pt]
& \geq & \frac{|U|+1}{|U|} \cdot \frac{|U|}{2} \\[3pt]
&=&\lceil \frac{|U|}{2} \rceil.
\end{array}
$$
Therefore, we conclude that $(1+\frac{1}{\ell(G)})\tilde{x}$ belongs to $\textrm{IP}(G)$.
\qed
\end{proof}

\subsection{Characterizing approximate core with integrality gap} \label{sec:prop-core}
In this subsection, we introduce a characterization for the approximate core of edge cover games.
For any vertex $v$, $N(v)$ denotes the set of vertices adjacent to $v$.
If a vertex subset $S\subseteq N(v)$, $\delta(v,S)$ denotes the set of crossing edges between $v$ and $S$.
In additionally, the induced subgraph $G[\delta(v,S)]$ is called a \textit{star} or a \textit{$v$-star} with $v$ being the \textit{center}.

Liu and Fang~\cite{LF07} showed that the core of edge cover games is closely related to the stars in the underlying graph.
Based on the observation that any minimum edge cover can be partitioned into some vertex-disjoint stars,
we introduce the following characterizations for the core property.

\begin{lemma} \label{lem:cost-stru}
A vector $a\in \mathbb{R}_+^{|V|}$ satisfies the core property of $\Gamma_G(V,c)$ if and only if for any vertex $v\in V$ and vertex subset $T \subseteq N(v)$ the inequality $a(T \cup \{v\})\leq \sum\nolimits_{e\in \delta(v,T)}w_e$ holds.
\end{lemma}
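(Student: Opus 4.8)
The plan is to prove the two implications separately, with necessity being routine and sufficiency carrying the real content. For necessity, suppose $a$ satisfies the core property and fix a vertex $v$ together with a non-empty subset $T\subseteq N(v)$ (so that $G[\delta(v,T)]$ is genuinely a star). The first observation is that $\delta(v,T)=\{vt:t\in T\}$ is itself an edge cover of the vertex set $T\cup\{v\}$, since $v$ meets every edge of the star and each $t\in T$ is met by $vt$. Hence $c(T\cup\{v\})\le\sum_{e\in\delta(v,T)}w_e$, and the core inequality $a(T\cup\{v\})\le c(T\cup\{v\})$ delivers the claim at once.

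For sufficiency I would assume all star inequalities and prove $a(S)\le c(S)$ for an arbitrary $S\subseteq V$. If $S$ is not coverable then $c(S)=+\infty$ and there is nothing to show, so assume $S$ is coverable and let $K^{*}$ be a minimum weight edge cover of $S$. The structural input I rely on is that a minimal edge cover is a vertex-disjoint union of stars: deleting any edge of a cycle, or the middle edge of a path on three edges, preserves the covering property, so a minimal cover is acyclic and $P_4$-free, which forces each component to be a star. I would first pass from $K^{*}$ to a minimal cover of the same weight by discarding redundant edges (these have weight $0$, as $K^{*}$ is of minimum weight), then decompose it into vertex-disjoint stars $K_1,\dots,K_m$ with centers $c_i$ and leaf sets $L_i$. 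Then $c(S)=\sum_i w(K_i)$, and since the vertex sets $V[K_i]=\{c_i\}\cup L_i$ are disjoint and cover every covered vertex, they partition $S$; it therefore suffices to prove $a(S\cap V[K_i])\le w(K_i)$ for each $i$ and sum.

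For this local bound I would split on whether the center lies in $S$. If $c_i\in S$, I apply the hypothesis to $v=c_i$ with $T=S\cap L_i$ when this set is non-empty, which bounds $a(\{c_i\}\cup(S\cap L_i))$ by $\sum_{\ell\in S\cap L_i}w_{c_i\ell}\le w(K_i)$; when $S\cap L_i=\emptyset$ I instead apply it to a single arbitrary leaf (some leaf exists because $K_i$ has an edge) and use $a\ge 0$ to discard the extra leaf term. If $c_i\notin S$, then $S\cap V[K_i]\subseteq L_i$, and for each leaf $\ell\in S\cap L_i$ the hypothesis with $v=\ell$, $T=\{c_i\}$ gives $a_\ell\le a_\ell+a_{c_i}\le w_{c_i\ell}$; summing these over the leaves in $S$ again gives at most $w(K_i)$. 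Adding the $m$ inequalities yields $a(S)\le\sum_i w(K_i)=c(S)$.

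The main obstacle is the sufficiency direction, and within it the charging step rather than the (standard) star decomposition. The delicate points are that stars whose centers fall outside $S$ must still be controlled, and that degenerate configurations — a center in $S$ with no selected leaf, or a single-edge star — must never force an empty $T$ in a hypothesis that presupposes a non-empty one. Reconciling this is exactly where the non-negativity of $a$ and the freedom to insert an auxiliary leaf are essential, and checking that every $T$ invoked is a non-empty subset of the appropriate neighbourhood is the crux of turning the sketch into a rigorous argument.
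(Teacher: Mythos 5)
Your proof takes essentially the same route as the paper's: necessity because $\delta(v,T)$ is itself an edge cover of $T\cup\{v\}$, and sufficiency by decomposing a minimum weight edge cover of $S$ into vertex-disjoint stars and charging $a$ against each star via the hypothesized inequalities. Your version is in fact somewhat more careful than the paper's, which applies the star inequality to each $T_i\cup\{u_i\}$ wholesale and silently uses $a\ge 0$ to absorb stars centered outside $S$ and to sidestep the degenerate empty-$T$ cases that you rightly flag.
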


\begin{proof}
Let $a\in \mathbb{R}_+^{|V|}$ be a vector satisfying the core property in $\Gamma_G(V,c)$, i.e.,
$a\geq 0$ and $a(S)\leq c(S)$ for any subset $S$ of $V$.
Since $T\subseteq N(v)$, $\delta(v,T)$ is an edge cover for $T \cup \{v\}$.
It follows that $a(T\cup \{v\}) \leq c(T \cup \{v\}) \leq \sum\nolimits_{e\in \delta(v,T)}w_e$.

To prove the converse, it suffices to show that  $a(S)\leq c(S)$ for any $S\subseteq V$.
Let $K$ denote the set of edges which covers $S$ with minimum weight.
It is evident that $K$ admits a star decomposition represented by $K_1,K_2,\ldots,K_l$.
For each $i=1,2,\ldots,l$, we define $u_i$ as the center of the star $K_i$, and $T_i$ as the set of vertices in the star $K_i$ except $u_i$.
Hence, we have $a(S)\leq \sum\nolimits_{i=1}^l a(T_i \cup \{u_i\}) \leq \sum\nolimits_{i=1}^l \sum\nolimits_{e\in \delta(u_i,S_i)}w_e=\sum\nolimits_{e\in K}w_e=c(S)$.
\qed
\end{proof}

Based on the linear programming formula of the $\alpha$-core,
we show that dual solutions of this game characterize the core property.

\begin{lemma} \label{lem:dual}
A vector $a\in \mathbb{R}_+^{|V|}$ satisfies the core property of $\Gamma_G(V,c)$ if and only if it is a dual feasible solution to the LP~\eqref{lp:frac-covering}.
\end{lemma}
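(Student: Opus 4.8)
The plan is to write down the linear-programming dual of~\eqref{lp:frac-covering} explicitly and then match dual feasibility against the star characterization of the core property furnished by Lemma~\ref{lem:cost-stru}. Both notions are stated for vectors $a\in\mathbb{R}_+^{|V|}$, so nonnegativity is common to them and only the remaining inequalities need to be compared. First I would form the dual: assigning a variable $a_v$ to each covering constraint~\eqref{lp:frac-covering:c1} and a dual constraint to each primal variable $x_e$, the dual of~\eqref{lp:frac-covering} is $\max \sum_{v\in V} a_v$ subject to $a_u+a_v\le w_{uv}$ for every edge $uv\in E$ together with $a\ge 0$. Hence $a\in\mathbb{R}_+^{|V|}$ is dual feasible precisely when $a_u+a_v\le w_{uv}$ holds for all $uv\in E$.

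I would then settle the two implications through Lemma~\ref{lem:cost-stru}, which asserts that $a$ satisfies the core property if and only if $a(T\cup\{v\})\le \sum_{e\in\delta(v,T)} w_e$ for every vertex $v$ and every nonempty $T\subseteq N(v)$. For the forward direction (core property $\Rightarrow$ dual feasibility), I would fix an edge $uv\in E$ and apply the star inequality with center $v$ and the singleton leaf set $T=\{u\}$; this yields $a_u+a_v\le w_{uv}$ at once, so $a$ is dual feasible.

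For the converse (dual feasibility $\Rightarrow$ core property), I would verify each star inequality directly. Fixing $v$ and a nonempty $T\subseteq N(v)$ and summing the edge constraints over the star gives $\sum_{u\in T}(a_u+a_v)\le \sum_{u\in T} w_{uv}$, that is, $|T|\,a_v+\sum_{u\in T} a_u\le \sum_{e\in\delta(v,T)} w_e$. The one subtlety is that the center weight $a_v$ is counted $|T|$ times on the left, whereas the star inequality requires it counted only once; this is exactly where nonnegativity enters, since $a_v\ge 0$ and $|T|\ge 1$ give $a_v\le |T|\,a_v$, whence $a(T\cup\{v\})=a_v+\sum_{u\in T} a_u\le |T|\,a_v+\sum_{u\in T} a_u\le \sum_{e\in\delta(v,T)} w_e$. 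By Lemma~\ref{lem:cost-stru} this establishes the core property.

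I expect the over-counting of the center weight in the backward direction to be the only real obstacle; everything else is a transcription of LP duality together with an invocation of Lemma~\ref{lem:cost-stru}. It is worth noting that the singleton choice $T=\{u\}$ already shows that the $|T|=1$ star inequalities coincide with dual feasibility, so the substance of the lemma is precisely that nonnegativity lets these pairwise edge constraints propagate to all larger stars, and thereby to arbitrary coalitions via the star decomposition used in Lemma~\ref{lem:cost-stru}.
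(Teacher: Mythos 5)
Your proposal is correct and follows essentially the same route as the paper: write the dual of LP~\eqref{lp:frac-covering} explicitly, get the forward direction from the star inequality with a singleton leaf set, and get the converse by summing the edge constraints over a star and absorbing the over-counted center weight $|T|a_v \geq a_v$ via nonnegativity, exactly as in the paper's bound $y(T\cup\{v\}) \leq y(T) + |T|y_v$. Your explicit handling of the empty-$T$ case and of the over-counting is, if anything, slightly more careful than the paper's.
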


\begin{proof}
Consider the dual of LP~\eqref{lp:frac-covering}.
\begin{maxi!}
{}{\sum\limits_{v \in V}y_v} 
{\label{lp:dual-covering}}{}
\addConstraint { y_u+y_v }{\leq w_{uv} \quad}{uv\in E}
\addConstraint {y_v}{\geq 0 \quad}{v\in V.}
\end{maxi!}

On the one hand, let $a\in \mathbb{R}_+^{|V|}$ be a vector satisfying core property of edge cover game $\Gamma_G(V,c)$.
By Lemma~\ref{lem:cost-stru}, it is easy to verify that $a_u+a_v\leq w_{uv}$ for any edge $uv \in E$.
This implies that $a$ is a feasible solution to LP~\eqref{lp:dual-covering}.

On the other hand, let $y$ be a feasible solution of LP~\eqref{lp:dual-covering}, we show that $y$ satisfies the core property.
Let $v\in V$ and $T\subseteq N(v)$.
Since $y_u+y_v\leq w_{uv}$, it follows that $y(T \cup \{v\}) \leq y(T)+|T|y_v \leq \sum\nolimits_{e\in \delta(v,T)}w_{uv}$.
By Lemma~\ref{lem:cost-stru}, $a$ satisfies the core property.
\qed
\end{proof}

Now we are ready to characterize the approximate ratio in terms of the integrality gap.

\begin{theorem} \label{thm:appcore}
Let $\Gamma_G(V,c)$ be the edge cover game defined on non-bipartite graph $G=(V,E;w)$.
Then the $\frac{\ell(G)}{1+\ell(G)}$-core of $\Gamma_G(V,c)$ is always non-empty and can be computed efficiently.
Moreover, $\frac{\ell(G)}{1+\ell(G)}$ is the largest ratio guaranteeing the non-emptiness of the approximate core of $\Gamma_G(V,c)$.
\end{theorem}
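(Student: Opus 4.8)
The plan is to establish both directions of the theorem: the positive direction (the $\frac{\ell(G)}{1+\ell(G)}$-core is non-empty and efficiently computable) and the tightness direction (no larger ratio works). For the positive direction, I would leverage the integrality gap computed in Theorem~\ref{thm:integrality-gap} together with the dual characterization of the core property from Lemma~\ref{lem:dual}. The natural candidate for an allocation is an optimal dual solution $y^*$ to LP~\eqref{lp:dual-covering}, which by Lemma~\ref{lem:dual} automatically satisfies the core property, hence $a(S)\leq c(S)$ for all $S$. The only remaining requirement for membership in the $\alpha$-core is the lower bound $a(V)\geq \alpha\, c(V)$ with $\alpha=\frac{\ell(G)}{1+\ell(G)}$.

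Let me sketch this lower bound. By LP duality, the optimal dual value $\sum_{v\in V} y^*_v$ equals the value of the minimum fractional edge cover, i.e. $\min\{wx:x\in \mathrm{P}(G)\}$. On the other hand, $c(V)=\gamma(G,w)=\min\{wx:x\in \mathrm{IP}(G)\}$ is the integral optimum. By the definition of the integrality gap in~\eqref{eq:integrality-gap} and Theorem~\ref{thm:integrality-gap}, we have
\begin{equation*}
\min\{wx:x\in \mathrm{IP}(G)\} \leq \rho(G)\cdot \min\{wx:x\in \mathrm{P}(G)\} = \Bigl(1+\tfrac{1}{\ell(G)}\Bigr)\sum_{v\in V} y^*_v.
\end{equation*}
Rearranging gives $\sum_{v\in V} y^*_v \geq \frac{\ell(G)}{1+\ell(G)}\, c(V)$, which is exactly $a(V)\geq \alpha\, c(V)$. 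Efficiency follows because $y^*$ can be computed in polynomial time: the optimal fractional cover is obtained via the half-integral construction of Lemma~\ref{lem:half-integer-x} (which reduces to a minimum edge cover in a bipartite graph, a polynomial-time task), and the matching optimal dual is recoverable by standard LP methods.

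For the tightness direction, I would argue that no ratio exceeding $\frac{\ell(G)}{1+\ell(G)}$ can be guaranteed in general. The idea is to exhibit a weight function—concentrating weight on a shortest odd cycle $C^*$—for which the integrality gap bound is met with equality, forcing any core-property vector to have total value at most $\frac{\ell(G)}{1+\ell(G)}\,c(V)$. Concretely, any $a$ satisfying the core property is dual-feasible, so $a(V)\leq \min\{wx:x\in \mathrm{P}(G)\}$, and for the extremal instance this fractional optimum is a $\frac{\ell(G)}{1+\ell(G)}$ fraction of the integral optimum $c(V)$; thus no allocation with the core property can recover more than this fraction, so the $\alpha$-core is empty for $\alpha>\frac{\ell(G)}{1+\ell(G)}$.

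The main obstacle I anticipate is the tightness argument: making precise that the worst-case weight function can be realized as a single game instance whose grand-coalition structure genuinely forces the gap, rather than merely showing the gap is achievable on the isolated cycle $C^*$. One must ensure the chosen weighting does not create cheaper covers elsewhere in $G$ that would slacken the bound, and that the value $\min\{wx:x\in \mathrm{P}(G)\}$ for the full game equals the fractional cover value of $C^*$ as computed in Theorem~\ref{thm:integrality-gap}. The positive direction, by contrast, should follow cleanly once the integrality-gap inequality and the dual characterization are combined.
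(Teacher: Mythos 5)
Your proposal matches the paper's proof essentially step for step: for the positive direction you take an optimal dual solution to LP~\eqref{lp:dual-covering}, invoke Lemma~\ref{lem:dual} for the core property, and use $\rho(G)=1+\frac{1}{\ell(G)}$ from Theorem~\ref{thm:integrality-gap} to get $a^*(V)\geq \frac{\ell(G)}{1+\ell(G)}\,c(V)$; for tightness you likewise pass from the core property to dual feasibility and weak duality to obtain $\alpha \leq \min\{wx:x\in \textrm{P}(G)\}/\gamma(G,w)$ and then appeal to the integrality gap. The ``obstacle'' you flag --- exhibiting a concrete extremal weight function whose fractional optimum on all of $G$ really equals that of the isolated shortest odd cycle --- is a legitimate concern, but the paper does not resolve it either; it simply invokes the definition of $\rho(G)$ at that point, so your argument is no less complete than the original.
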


\begin{proof}
An optimal solution $a^*$ to the dual of LP~\eqref{lp:frac-covering} can be computed in polynomial time using standard linear programming techniques.
By Lemma~\ref{lem:dual}, $a^*$ satisfies the core property, i.e., $a^*(S)\leq c(S)$ for any coalition $S\subseteq V$.
Besides, we have
$$
\begin{array}{lcl}
\rho(G) a^*(V) &=& \rho(G) \cdot \min\{wx:x\in \textrm{P}(G)\}\\[3pt]
& \geq & \min\{wx:x\in \textrm{IP}(G)\}\\[3pt]
&=& c(V).
\end{array}
$$
According to the definition of the approximate core, $a^*$ is a $\frac{\ell(G)}{1+\ell(G)}$-core for $\Gamma_G(V,c)$.
Algorithms for finding a shortest odd cycle of a graph can be finished in time $O(|V||E|)$ by using breadth-first search in~\cite{IR78}.
Thus we can calculate $\rho(G)$ in polynomial time of $|V|$ and $|E|$.

Now we show that $\frac{\ell(G)}{1+\ell(G)}$ is the largest ratio guaranteeing the non-emptiness of the approximate core.
Suppose $a$ is a vector in the $\alpha$-core of $\Gamma_G(V,c)$.
By the definition, the $a$ satisfies the core property.
Thus $a$ is a feasible dual solution to LP~\eqref{lp:frac-covering} by Lemma~\ref{lem:dual}.
We have
\begin{equation*}
  \alpha \gamma(G,w)= \alpha c(V) \leq a(V) \leq \min\{wx:x\in \textrm{P}(G)\},
\end{equation*}
where the last inequality follows by the weak duality theorem of linear programming.
It follows that
\begin{equation*}
\alpha \leq \frac{\min\{wx:x\in \textrm{P}(G)\}}{\gamma(G,w)}.
\end{equation*}
By the definition of integrality gap, we have $\alpha \leq \frac{1}{\ell(G)}$.
\qed
\end{proof}

Since the length of the shortest odd cycle is at least $3$, 
the ratio will degenerate to $\frac{3}{4}$ if there is any triangle in the graph.
\begin{corollary}
Let $\Gamma_G(V,c)$ be the edge cover game defined on graph $G=(V,E;w)$.
Then the $\frac{3}{4}$-core of $\Gamma_G(V,c)$ is always non-empty.
Moreover, an allocation in the $\frac{3}{4}$-core of $\Gamma_G(V,c)$ can be computed efficiently.
\end{corollary}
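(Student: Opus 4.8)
The plan is to derive the corollary directly from Theorem~\ref{thm:appcore}, together with the elementary fact that the shortest odd cycle of a graph, whenever one exists, has length at least $3$. First I would record the monotonicity of the approximate core in the ratio $\alpha$: since $c(V)\geq 0$, the only $\alpha$-dependent constraint, $a(V)\geq \alpha c(V)$, merely weakens as $\alpha$ decreases, so whenever the $\beta$-core is non-empty and $\alpha\leq\beta$ the $\alpha$-core is non-empty as well and in fact contains the $\beta$-core. Thus it suffices to exhibit a non-empty $\beta$-core with $\beta\geq\frac{3}{4}$ and to check that the witnessing allocation is computable efficiently.

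Next I would split on whether $G$ is bipartite. If $G$ is non-bipartite, then $\ell(G)$ is well defined and, because every odd cycle contains at least three edges, $\ell(G)\geq 3$. The map $t\mapsto \frac{t}{1+t}$ is increasing for $t>0$, so $\frac{\ell(G)}{1+\ell(G)}\geq \frac{3}{4}$. Theorem~\ref{thm:appcore} then guarantees that the $\frac{\ell(G)}{1+\ell(G)}$-core is non-empty and that the optimal dual solution $a^{*}$ of LP~\eqref{lp:frac-covering} realizes it in polynomial time; by the monotonicity recorded above, this same $a^{*}$ lies in the $\frac{3}{4}$-core.

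If $G$ is bipartite I would instead invoke the second part of Theorem~\ref{thm:integrality-gap}, which yields $\rho(G)=1$, so that the full core (and hence, a fortiori, the $\frac{3}{4}$-core) is non-empty. In fact both cases can be merged into a single clean argument: compute an optimal dual solution $a^{*}$, which satisfies the core property by Lemma~\ref{lem:dual}, and note that $\rho(G)\leq \frac{4}{3}$ always holds, since either $\rho(G)=1$ or $\rho(G)=1+\frac{1}{\ell(G)}\leq \frac{4}{3}$ by $\ell(G)\geq 3$. Running the estimate from the proof of Theorem~\ref{thm:appcore} gives $\rho(G)\,a^{*}(V)\geq c(V)$, whence $a^{*}(V)\geq \frac{1}{\rho(G)}c(V)\geq \frac{3}{4}c(V)$, so $a^{*}$ is a $\frac{3}{4}$-core allocation computable in polynomial time. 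I expect no substantial obstacle here; the only points requiring care are handling the bipartite edge case uniformly and confirming that the extremal bound $\ell(G)=3$ is precisely the binding worst case, which is exactly where the universal constant $\frac{3}{4}$ originates.
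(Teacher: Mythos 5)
Your proposal is correct and follows essentially the same route as the paper, which derives the corollary from Theorem~\ref{thm:appcore} via the observation that $\ell(G)\geq 3$, hence $\frac{\ell(G)}{1+\ell(G)}\geq\frac{3}{4}$. Your treatment is in fact slightly more careful than the paper's one-line remark, since you explicitly record the monotonicity of the $\alpha$-core in $\alpha$ and handle the bipartite case (which Theorem~\ref{thm:appcore} formally excludes) via $\rho(G)=1$ from Theorem~\ref{thm:integrality-gap}.
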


\section{Conclusion} \label{sec:conclusion}
In this paper, we considered a cost allocation problem for the edge cover game. 
We characterize approximate core by using the dual solution of the nature linear programming problem.
Therefore, the best approximate factor depends on the integrality gap between the integer linear programming problem and its relaxation.
To estimate this factor, we employ linear programming rounding techniques and prove that it is $1+\frac{1}{\ell(G)}$.
This result ensures that the $\frac{\ell(G)}{1+\ell(G)}$-core of the edge cover game is always non-empty.
Additionally, when the shortest odd cycle in the underlying graph is equal to 1,
our proposed solution degenerates into a factor of $\frac{3}{4}$.

One possible working direction for edge cover games is to study the allocation of nucleon,
where Faigle et al.~\cite{FKF98} studied the nucleon of matching games and Kern and Paulusma~\cite{KP09} studied the nucleon of simple flow games.
Our result might be helpful since a nucleon locals in the allocation of the largest satisfaction ratio.
Besides, variants of edge cover games introduced by Liu and Fang~\cite{LF07} are also worth studying.

\subsubsection{Acknowledgements.}
The work is supported in part by in part by the National Natural Science Foundation of China (Nos.\,12001507, 11871442, 11971447 and 12171444) and Natural Science Foundation of Shandong (No.\,ZR2020QA024).

\bibliographystyle{splncs04}
\bibliography{ref.bib}
\nocite{*}

\end{document}